\newtheorem{theorem}{Theorem}
\newtheorem{corollary}{Corollary}
\newtheorem{lemma}{Lemma}
\newtheorem{assertion}{Assertion}
\newtheorem{definition}{Definition}
\begin{document}
\vspace*{-1.0cm}\noindent \copyright
 Journal of Technical University at Plovdiv\\[-0.0mm]\
\ Fundamental Sciences and Applications, Vol. 11, 2006\\[-0.0mm]
\textit{Series A-Pure and Applied Mathematics}\\[-0.0mm]
\ Bulgaria, ISSN 1310-8271\\[+1.2cm]
\font\fourteenb=cmb10 at 12pt
\begin{center}

   {\bf \LARGE A Connection on Manifolds with a Nilpotent Structure \\ \ \\ \large Asen Hristov}
\end{center}

\

\

\footnotetext{{\bf 1991 Mathematics Subject Classification:} 30E20, 30D50} \footnotetext{{\it Key words and
phrases:} semi-Riemannian geometry, curvature tensor, geodesics, Einstein field equations, Schwarzschild
solution.} \footnotetext{ Received September 15, 2006 }

\begin{abstract}
All the connections, pure toward the nilpotent stucture, are found. Examples of manifolds, for which the
curvature tensor is pure or hybridous, are given. For a manifold of B-type a necessary and sufficient condition
for purity of the curvature tensor is proved. It is verified that the conformal change of the metric of a
B-manifold does not retain its puruty.

\bigskip
{\small\bf Keywords:} semi-Riemannian geometry, curvature tensor, geodesics, Einstein field equations,
Schwarzschild solution.
\end{abstract}
\

We suppose that $B$ is a submanifold of $E$, where $\dim B = n$, and
$\dim E = m+n$. We assume that
\[\delta : E \rightarrow B\]
is a submersion in $E$. If point $p$ belongs to the base $B$ then
the set of points
\[\delta^{(-1)} (p) \subset E\]
are a layer over it. A priori we suppose a local triviality of
$\delta$. By $TB$ we denote the tangential differentiation on the
base $B$. If we designate by
\[( z^{i},z^{n+i},z^{2n+a} ),\  i
=1,2,\dots,n; \ a =1,2,\dots,m \ ,\] then $z^{i}$ are coordinates of
a point from $B$, $z^{n+i}$ are coordinates of a point in a layer
from $TB ( \pi : TB \rightarrow B $, where $\pi^{-1} (p)$ is a
tangential layer over a point $p \in B )$, and $z^{2n+a}$ are
coordinates of a point from $\sigma^{-1} (p)$. We can interpret the
variables
\[ \big( z^{i}, z^{n+i}, z^{2n+a} \big)\]
as local coordinates of a point from a $2n+m$ - dimensional manifold
$\mathfrak{M}$ provided that these coordinates are replaced by the
following rule \cite{2}:

\[ z^{i} = \varphi^{i} (\overline{z}^{1}, \overline{z}^{2}, \dots , \overline{z}^{n}) \]

\begin{equation} z^{n+i} = \sum_{k} \frac{\displaystyle\partial\varphi^{i} (\overline{z}^{1}, \overline{z}^{2}, \dots , \overline{z}^{n})}{\displaystyle\partial \overline{z}^{k}} . \overline{z}^{n+k} \end{equation}

\[ z^{2n+a} = \Theta^{a} (\overline{z}^{1}, \overline{z}^{2}, \dots , \overline{z}^{n}; \ \overline{z}^{2n+1}, \overline{z}^{2n+2}, \dots , \overline{z}^{2n+m}) \]

It is proved in \cite{2} that $\mathfrak{M}$ tolerates an
integratable nilpotent structure $f$, called semitangential by the
author of this paper, which commutates with the Jacobian of (1).
With regard to a suitable local basis on $\mathfrak{M}$, the matrix
of $f$ has the following form
\[ \big( f^{\alpha}_{\beta} \big) =
\left( \begin{array}{c|c|c}
\ & \ & \ \\
\hline
E & \ & \ \\
\hline \ \ & \ & \end{array} \right) \ ,\]
E  is a single matrix
from row $n$, and the blank blocks are zero blocks.

We assume that $\nabla = \mathfrak{g}^{\sigma}_{\alpha \beta}$ is a
torsionless connection on $\mathfrak{M}$ with connection
coefficients $\mathfrak{g}^{\sigma}_{\alpha \beta}$. As regards
$\nabla$, we impose a purity condition of the coefficients toward
$f$ as well:

\[ \mathfrak{g}^{\lambda}_{\beta \sigma} f^{\alpha}_{\beta} = \mathfrak{g}^{\alpha}_{\beta \lambda} f^{\lambda}_{\sigma} = \mathfrak{g}^{\alpha}_{\lambda \sigma} f^{\lambda}_{\beta} \ , \ \ \ \ \ \alpha, \beta, \sigma, \lambda, \dots = 1, 2, \dots , m+2n \]

A solution to the above condition for purity is

\begin{equation} \mathfrak{g}^{k}_{is} = \mathfrak{g}^{n+k}_{i, n+s} = \mathfrak{g}^{n+k}_{n+i,s}  \ , \  \mathfrak{g}^{n+k}_{is} \ \ \mbox{and} \ \ \mathfrak{g}^{2n+a}_{\beta \sigma}, \mathfrak{g}^{\sigma}_{\beta, 2n+a}
\end{equation}

In the special case $m=0$, i.e. when $\mathfrak{M}$ is the
tangential differentiation of $B$, the coefficients of the second
series of solution (2) are
\[\mathfrak{g}^{2n+a}_{\beta \sigma} =
\mathfrak{g}^{\sigma}_{\beta, 2n+a} = 0 \ . \]

In this case, if $\overset{\circ}{\nabla} = (\mathfrak{g}^{h}_{ik})$
is a torsionless connection on the base, then we denote
$u^{i}=z^{i}, \ y^{i}=z^{n+i}$ so therefore

\[ \mathfrak{g}^{h}_{ik} = \mathfrak{g}^{h}_{ik}, \ \mathfrak{g}^{n}_{i, n+k} = \mathfrak{g}^{h}_{n+i,k} = \mathfrak{g}^{n}_{n+i, n+k} = \mathfrak{g}^{n+n}_{n+i, n+k} = 0 \ ,\]

\[ \mathfrak{g}^{n+n}_{ik} = z^{s} \frac{\partial \mathfrak{g}^{h}_{ik}}{\partial u^{s}} \ ,\]

\[ \mathfrak{g}^{n+h}_{i,n+k} = \mathfrak{g}^{n+h}_{n+i,k} = \mathfrak{g}^{h}_{ik} \ .\]

Thus $\mathfrak{g}^{\alpha}_{\beta \sigma}$ coincide with the
coefficients of the complete lift of the connection
$\overset{\circ}{\nabla}$.

\begin{lemma} If a connection has pure coefficients with respect to one
upper and one lower index, and f is covariantly constant, then f is
an integratable structure. \end{lemma}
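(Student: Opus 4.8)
The plan is to characterise integrability of $f$ by the vanishing of its Nijenhuis torsion and then to show that this torsion is forced to vanish. Recall that in the local basis above the Nijenhuis tensor of the $(1,1)$-field $f$ is
\[ N^{\alpha}_{\beta\gamma} = f^{\mu}_{\beta}\,\partial_{\mu}f^{\alpha}_{\gamma} - f^{\mu}_{\gamma}\,\partial_{\mu}f^{\alpha}_{\beta} + f^{\alpha}_{\mu}\,\partial_{\gamma}f^{\mu}_{\beta} - f^{\alpha}_{\mu}\,\partial_{\beta}f^{\mu}_{\gamma}, \]
so the whole task reduces to eliminating the four partial derivatives of $f$ and checking that what remains cancels. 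I would record at the outset that the displayed matrix of $f$ squares to zero, $f^{\alpha}_{\lambda}f^{\lambda}_{\mu}=(f^{2})^{\alpha}_{\mu}=0$; this nilpotency is the feature that makes the argument close.

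First I would use the hypothesis that $f$ is covariantly constant, namely
\[ \nabla_{\gamma}f^{\alpha}_{\beta} = \partial_{\gamma}f^{\alpha}_{\beta} + \mathfrak{g}^{\alpha}_{\gamma\lambda}f^{\lambda}_{\beta} - \mathfrak{g}^{\lambda}_{\gamma\beta}f^{\alpha}_{\lambda} = 0, \]
to solve for each first derivative as $\partial_{\gamma}f^{\alpha}_{\beta} = \mathfrak{g}^{\lambda}_{\gamma\beta}f^{\alpha}_{\lambda} - \mathfrak{g}^{\alpha}_{\gamma\lambda}f^{\lambda}_{\beta}$, and substitute these four expressions into $N^{\alpha}_{\beta\gamma}$. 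The outcome is a sum of eight terms, each a contraction of two copies of $f$ with a single coefficient $\mathfrak{g}$.

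Then I would split the eight terms according to how the two factors of $f$ meet $\mathfrak{g}$. Four of them are genuinely mixed, with one $f$ on a lower slot and a separate $f^{\lambda}_{\bullet}$ on the other lower slot; applying the purity assumption with respect to one upper and one lower index, $f^{\alpha}_{\lambda}\mathfrak{g}^{\lambda}_{\beta\sigma} = f^{\lambda}_{\beta}\mathfrak{g}^{\alpha}_{\lambda\sigma}$, moves the $f$ from the upper slot of $\mathfrak{g}$ onto a lower slot and makes these four terms cancel in two pairs. In the remaining four terms one obtains a factor $f^{\alpha}_{\mu}f^{\mu}_{\lambda}=(f^{2})^{\alpha}_{\lambda}$ — two of them display it immediately, the other two after a single application of the same purity identity — and since $f^{2}=0$ these terms vanish as well. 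Hence $N^{\alpha}_{\beta\gamma}\equiv 0$, which is the integrability of $f$.

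The main obstacle is the index bookkeeping in the middle step: one must keep track of which lower slot of $\mathfrak{g}$ each $f$ is contracted against, so that the one purity identity is invoked in exactly the right pairing, and one must notice that the leftover antisymmetric combinations $\mathfrak{g}^{\mu}_{\beta\gamma}-\mathfrak{g}^{\mu}_{\gamma\beta}$ are annihilated by $f^{2}=0$ rather than by symmetry of the coefficients. I would emphasise this last point, since it shows that nilpotency of $f$ is what drives the result and that the torsionless character of $\nabla$ is not actually needed for this lemma.
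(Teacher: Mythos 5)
Your argument is correct, but it takes a genuinely different and much longer route than the paper's. The paper simply observes that purity of the coefficients in one upper and one lower index means the two Christoffel terms of $\nabla_{\sigma}f^{\alpha}_{\beta}$ coincide and cancel, so $\nabla_{\sigma}f^{\alpha}_{\beta}=\partial_{\sigma}f^{\alpha}_{\beta}$; covariant constancy then forces $\partial_{\sigma}f^{\alpha}_{\beta}=0$, i.e.\ $f$ has constant components in the adapted chart, and integrability is immediate --- two lines, no Nijenhuis tensor, no use of $f^{2}=0$. You instead verify the criterion $N^{\alpha}_{\beta\gamma}=0$ by solving $\nabla f=0$ for the partials and cancelling the eight resulting terms via purity and nilpotency; I checked the bookkeeping and it closes exactly as you describe. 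What you miss, however, is that the very expression you substitute, $\partial_{\gamma}f^{\alpha}_{\beta}=\mathfrak{g}^{\lambda}_{\gamma\beta}f^{\alpha}_{\lambda}-\mathfrak{g}^{\alpha}_{\gamma\lambda}f^{\lambda}_{\beta}$, is identically zero by the purity hypothesis itself (that equality of the two contractions is literally what purity in the upper and a lower index asserts), so all four partial derivatives of $f$ vanish individually and the eight-term cancellation is vacuous; this is the paper's shortcut, and it yields the stronger conclusion $f^{\alpha}_{\beta}=\mathrm{const}$ rather than merely $N=0$. On the other side of the ledger, your route buys something real: it makes explicit that the standard integrability criterion is what is being verified (the paper passes from ``$f$ constant'' to ``integrable'' without comment), it shows the torsionless character of $\nabla$ is not needed, and it isolates the role of $f^{2}=0$ --- so it would survive under a weaker purity hypothesis where the two Christoffel terms of $\nabla f$ do not cancel outright.
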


\begin{proof}
From the condition

\[ \nabla_{\sigma} f^{\alpha}_{\beta} = \partial_{\sigma}f^{\alpha}_{\beta} - \mathfrak{g}^{\alpha}_{\sigma \lambda} f^{\lambda}_{\beta} + \mathfrak{g}^{\nu}_{\sigma \beta} f^{\alpha}_{\nu} = 0 \]

follows that

\[ \nabla_{\sigma} f^{\alpha}_{\beta} = \partial_{\sigma}f^{\alpha}_{\beta} = 0 \ ,  \]

which shows that $f^{\alpha}_{\beta} = const$.

\end{proof}

\begin{corollary} If f is an integratable structure and $\nabla f = 0$, then we
have purity of the coefficients $\mathfrak{g}^{\alpha}_{\beta
\sigma}$ with respect to one upper and one lower
index.\end{corollary}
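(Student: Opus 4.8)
The plan is to reverse the computation used in the Lemma, reading the identity $\nabla f = 0$ as a constraint on the connection coefficients rather than on $f$. First I would invoke integrability of $f$: because $f$ is the integratable semitangential structure constructed in \cite{2}, there is an adapted local basis on $\mathfrak{M}$ in which the matrix $\big(f^{\alpha}_{\beta}\big)$ has the constant block form displayed above. In such a basis every entry $f^{\alpha}_{\beta}$ is a constant, so $\partial_{\sigma} f^{\alpha}_{\beta} = 0$ identically. This is the step I expect to carry the real content: it is exactly the integrability hypothesis that licenses choosing coordinates in which $f$ is constant, and without it the argument collapses.

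With $\partial_{\sigma} f^{\alpha}_{\beta} = 0$ in hand, I would substitute into the expression for the covariant derivative recalled in the proof of the Lemma,
\[ \nabla_{\sigma} f^{\alpha}_{\beta} = \partial_{\sigma} f^{\alpha}_{\beta} - \mathfrak{g}^{\alpha}_{\sigma\lambda} f^{\lambda}_{\beta} + \mathfrak{g}^{\nu}_{\sigma\beta} f^{\alpha}_{\nu}. \]
The hypothesis $\nabla f = 0$ then forces the two connection terms to balance, yielding
\[ \mathfrak{g}^{\alpha}_{\sigma\lambda} f^{\lambda}_{\beta} = \mathfrak{g}^{\nu}_{\sigma\beta} f^{\alpha}_{\nu}. \]
This is already an equality asserting that contracting $f$ against the upper index $\alpha$ agrees with contracting it against the lower index $\beta$.

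Finally I would match this relation to the purity condition stated earlier in the excerpt. Relabelling the summed index $\nu$ as $\lambda$ and using that $\nabla$ is torsionless — so that $\mathfrak{g}^{\alpha}_{\sigma\lambda} = \mathfrak{g}^{\alpha}_{\lambda\sigma}$ and $\mathfrak{g}^{\lambda}_{\sigma\beta} = \mathfrak{g}^{\lambda}_{\beta\sigma}$ — the derived identity becomes $\mathfrak{g}^{\lambda}_{\beta\sigma} f^{\alpha}_{\lambda} = \mathfrak{g}^{\alpha}_{\lambda\sigma} f^{\lambda}_{\beta}$, which is precisely purity of the coefficients $\mathfrak{g}^{\alpha}_{\beta\sigma}$ with respect to one upper and one lower index. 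The only delicate points are the appeal to integrability in the first paragraph and the bookkeeping with the symmetry of the lower indices here; both become routine once the constant form of $f$ is available, so I expect the integrability step to be the genuine obstacle and the rest to be verification.
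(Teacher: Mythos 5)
Your proposal is correct and is exactly the argument the paper intends: the Corollary is stated without proof as the reverse reading of Lemma~1, and you reverse that same computation — integrability gives $\partial_{\sigma}f^{\alpha}_{\beta}=0$ in the adapted basis, so $\nabla f=0$ forces $\mathfrak{g}^{\alpha}_{\sigma\lambda}f^{\lambda}_{\beta}=\mathfrak{g}^{\nu}_{\sigma\beta}f^{\alpha}_{\nu}$, which together with the symmetry of the lower indices is the purity condition. Your bookkeeping with the torsion-free symmetry and the index relabelling is sound.
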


\begin{lemma} If $f$ is integratable and $R^{\sigma}_{\alpha \beta \gamma}$ are the corresponding
components of the curvature tensor for $\nabla$, then the condition
$\nabla f = 0$ is sufficient for $R^{\sigma}_{\alpha \beta \gamma}$
to be pure with respect to one upper and one lower index.
\end{lemma}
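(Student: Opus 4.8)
The plan is to exploit the Ricci identity for the covariant derivative applied to the $(1,1)$-tensor $f$, turning the hypothesis $\nabla f = 0$ directly into a purity relation on $R^{\sigma}_{\alpha\beta\gamma}$. Since $\nabla f = 0$ holds identically on $\mathfrak{M}$, differentiating once more gives $\nabla_{\beta}\nabla_{\gamma} f^{\sigma}_{\alpha} = 0$ and $\nabla_{\gamma}\nabla_{\beta} f^{\sigma}_{\alpha} = 0$, so in particular the antisymmetrised second derivative
\[ [\nabla_{\beta}, \nabla_{\gamma}] f^{\sigma}_{\alpha} = 0 \]
vanishes. The whole argument rests on evaluating this commutator a second way.

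First I would write out $[\nabla_{\beta},\nabla_{\gamma}] f^{\sigma}_{\alpha}$ by the Ricci identity for a tensor of type $(1,1)$, which expresses the antisymmetrised second covariant derivative purely in terms of the curvature acting on the two indices of $f$:
\[ [\nabla_{\beta}, \nabla_{\gamma}] f^{\sigma}_{\alpha} = R^{\sigma}_{\lambda\beta\gamma} f^{\lambda}_{\alpha} - R^{\lambda}_{\alpha\beta\gamma} f^{\sigma}_{\lambda} . \]
This is derived in the usual way from the definition of $R^{\sigma}_{\alpha\beta\gamma}$ in terms of $\mathfrak{g}^{\sigma}_{\alpha\beta}$, by substituting $\nabla_{\sigma} f^{\alpha}_{\beta}$ into the formula for the second covariant derivative and cancelling the terms symmetric in $\beta,\gamma$; the torsionless assumption on $\nabla$ is what guarantees there is no extra torsion term on the left.

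Combining the two displays, the left-hand side is zero, and hence
\[ R^{\sigma}_{\lambda\beta\gamma} f^{\lambda}_{\alpha} = R^{\lambda}_{\alpha\beta\gamma} f^{\sigma}_{\lambda} . \]
This is exactly the statement that $f$ may be moved from the upper index $\sigma$ onto the lower index $\alpha$ without altering the value of the tensor, i.e. that $R^{\sigma}_{\alpha\beta\gamma}$ is pure with respect to one upper and one lower index, in the same sense in which purity was imposed on the coefficients $\mathfrak{g}^{\sigma}_{\alpha\beta}$ in the displayed purity condition. This establishes sufficiency.

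The main obstacle here is not mathematical depth but bookkeeping of conventions: one must check that the Ricci identity, written in the author's curvature convention (the precise placement of the $\beta,\gamma$ pair and the overall sign of $R^{\sigma}_{\alpha\beta\gamma}$), produces exactly the two terms above, and that the resulting identity matches the paper's notion of purity with respect to one upper and one lower index. Once the convention is pinned down, the proof is the two-line computation above. It is worth remarking that this direction uses only $\nabla f = 0$ and the torsionless property; the integrability of $f$, while part of the standing hypotheses and needed to make the geometric setting meaningful (and available via Lemma~1), is not itself invoked in deducing the purity of the curvature.
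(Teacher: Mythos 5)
Your argument is exactly the paper's: apply the Ricci identity to the $(1,1)$-tensor $f$, use $\nabla f=0$ to annihilate the left-hand side, and read off $R^{\sigma}_{\lambda\beta\gamma}f^{\lambda}_{\alpha}=R^{\lambda}_{\alpha\beta\gamma}f^{\sigma}_{\lambda}$ as the purity in one upper and one lower index. Your write-up is in fact cleaner, since the paper's displayed Ricci identity contains index typos and simply declares the conclusion ``obvious.''
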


\begin{proof} From the Ricci identity

\[ \nabla_{\alpha} \nabla_{\beta} f^{\gamma}_{\sigma} - \nabla_{\beta} \nabla_{\alpha} f^{\gamma}_{\sigma} = R^{\sigma}_{\alpha \beta \gamma} f^{\gamma}_{\sigma} - R^{\sigma}_{\alpha \beta \gamma} f^{\gamma}_{\lambda} \]

the proof is obvious. \end{proof}

The purity of the Riemannian curvature tensor with respect to an
upper and a lower index does not automatically mean purity with
respect to two lower indices. An example of this are Riemannian
manifolds, the functional tensor of which conforms in a suitable way
with the semitangential structure $f$.

\

\textbf{Example 1.} The Riemannian manifold $\mathfrak{M} (g,f)$
with a metric tensor $g = (g_{\alpha \beta})$ and an assigned
integratiable nilpotent tensor field $f$ of type (1,1) is called a
manifold of $B$-type, if for every two vector fields $x$ and $y$
there follows:

\[ g(fx,y) = g(x,fy) \ . \]

We note down that the last condition is equivalent to the purity of
$g_{\alpha \beta}$, where $\tilde{g} = (g_{\alpha \lambda}
f^{\lambda}_{\beta})$ is a symmetrical tensor field, and under
certain conditions $R^{\sigma}_{\alpha \beta \gamma}$ is pure with
respect to all indices.

Detailed information about this type of manifolds and research work,
connected with them, can be found in the studies of E. V. Pavlov.

\

\textbf{Example 2.} Riemannian manifolds $\mathfrak{M} (g,f)$ with a
metric tensor $g = g_{\alpha \beta}$ and an assigned integratiable
nilpotent tensor field $f$ of type (1,1) is called a manifold of
$K\ddot{a}hler$ type if for every two vector fields $x$ and $y$
there follows:

\[ g(x, fy) = - g(fx, y) \]

The last equation is called a condition for the hybridity of $g$
toward $f$. In this example the tensor field $\tilde{g} = (g_{\alpha
\lambda} f^{\lambda}_{\beta} = \tilde{g}_{\beta \alpha})$ is
antisymmetric, $R^{\sigma}_{\alpha \beta \gamma}$ is not pure with
respect to all indices.

Particular examples of this type of $K\ddot{a}hler$ manifolds can be
found in (\cite{3}, p. 137).

\begin{assertion} If $\mathfrak{M} (g,f)$ is a manifold from type $B$, then the
components $R^{\gamma}_{\sigma \alpha \beta}$ and $R_{\sigma \alpha
\beta \gamma}$ are pure with respect to the two indices $(\beta ,
\gamma)$. If $\mathfrak{M}(g,f)$ is of $K\ddot{a}hler$ type, then
$R_{\sigma \beta \alpha \gamma}$ and $R^{\tau}_{\sigma \beta
\alpha}$ are hybrid with respect to $(\alpha, \gamma)$ and pure with
respect to $(\alpha, \tau)$ respectively.
\end{assertion}

\begin{proof} The condition in both cases follows from Lemma 2. We have

\[ R^{\lambda}_{\alpha \beta \gamma} \ . \ f^{\sigma}_{\alpha} = R^{\sigma}_{\alpha \beta \lambda} \ . \ f^{\lambda}_{\gamma}  \]

Hence follows that \[ R^{\lambda}_{\alpha \beta \gamma} . F_{\lambda
\sigma} = R^{\sigma}_{\alpha \beta \lambda} . f^{\lambda}_{\gamma} \
,\]

where $F_{\lambda \sigma} = g_{\sigma \tau} f^{\tau}_{\lambda}$, and
taking the antisymmetrization of $F_{\lambda \sigma}$ into
consideration, we obtain

\[ - R^{\lambda}_{\alpha \beta \gamma} . f^{\lambda}_{\sigma} = R^{\lambda}_{\alpha \beta \gamma} . f^{\lambda}_{\gamma} \ .\]

When $\mathfrak{M}(g,f)$ is of type $B$, the proof is analogical.
\end{proof}

\

In connection with the $B$-type manifolds, two important theorems
need to be mentioned, which are proved in \cite{4} and \cite{5}
respectively.

\

\begin{theorem} If the components $R_{\sigma \alpha \beta \lambda}$ of the curvature tensor of a $B$-type
manifold are pure with respect to two indices, then they are pure
with respect to all double indices. \end{theorem}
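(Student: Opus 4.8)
The plan is to deduce purity in every pair of indices from purity in a single pair by exploiting the algebraic symmetries of the curvature tensor together with the first Bianchi identity; the $B$-type condition enters only through the symmetry of $F_{\lambda \sigma} = g_{\sigma \tau} f^{\tau}_{\lambda}$, which lets one pass freely between $R^{\sigma}_{\alpha \beta \gamma}$ and $R_{\sigma \alpha \beta \gamma}$ and move $f$ between an upper and a lower slot. Recall that $R_{\sigma \alpha \beta \gamma}$ is pure with respect to a pair of slots when the transvection of $f$ with one slot equals the transvection with the other. The tools available are the two antisymmetries $R_{\sigma \alpha \beta \gamma} = -R_{\alpha \sigma \beta \gamma} = -R_{\sigma \alpha \gamma \beta}$, the block symmetry $R_{\sigma \alpha \beta \gamma} = R_{\beta \gamma \sigma \alpha}$, and the first Bianchi identity $R_{\sigma \alpha \beta \gamma} + R_{\sigma \beta \gamma \alpha} + R_{\sigma \gamma \alpha \beta} = 0$.

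First I would record the two aligned purities. By the Assertion, a $B$-type manifold is pure in the pair $(\beta, \gamma)$; applying the block symmetry $R_{\sigma \alpha \beta \gamma} = R_{\beta \gamma \sigma \alpha}$ turns this statement about the last pair into the identical statement about the first pair, so purity in $(\sigma, \alpha)$ comes for free. These two do not by themselves reach the four mixed pairs: transvecting $f$ on one slot and sliding it to another within the same antisymmetric block is all the antisymmetries and the block symmetry can produce, and a short computation shows that, modulo the two aligned purities, purity in $(\alpha, \beta)$ is equivalent to purity in $(\sigma, \gamma)$, and purity in $(\alpha, \gamma)$ to purity in $(\sigma, \beta)$. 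Thus the six pairs collapse to two mixed families, and the hypothesis of the theorem supplies the vanishing of one of them.

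The heart of the argument is to connect the two mixed families, and for this the only relation tying the two antisymmetric blocks together cyclically is the first Bianchi identity. I would transvect the Bianchi identity with $f$ on a single fixed index, then rewrite the two cyclically permuted terms using the already-established aligned purities together with the antisymmetries, and re-collect. Because the permuted terms carry $f$ on a slot lying in the opposite block, this substitution forces the transvection on the remaining mixed pair to agree with one already known, i.e. it yields purity in the second mixed family. With one pair from each family now pure, the coupling observed above upgrades this to all four mixed pairs, and together with the two aligned pairs we obtain purity with respect to every double index. The main obstacle is exactly this cross-block step: the antisymmetries and the block symmetry preserve the block structure, so the passage from the aligned pairs to the crossed pairs must be extracted from the Bianchi identity, and the delicate part is the bookkeeping of the index permutations and of the two signs coming from the antisymmetries when the transvected Bianchi relation is collected.
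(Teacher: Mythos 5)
The paper itself contains no proof of this theorem: it is stated as a result imported from \cite{4} and \cite{5}, so there is no internal argument to compare yours against. Judged on its own merits, your strategy is essentially correct, and the step you identify as the heart of the matter does work. Concretely, set $B_{\sigma\alpha\beta\gamma}=R_{\sigma\lambda\beta\gamma}f^{\lambda}_{\alpha}$ and $C_{\sigma\alpha\beta\gamma}=R_{\sigma\alpha\lambda\gamma}f^{\lambda}_{\beta}$. Transvecting the first Bianchi identity with $f$ in the slot $\sigma$ and using purity in $(\sigma,\alpha)$ gives $B_{\sigma\alpha\beta\gamma}+B_{\sigma\beta\gamma\alpha}+B_{\sigma\gamma\alpha\beta}=0$; transvecting it instead with $f$ in the slot $\alpha$ and using purity in $(\beta,\gamma)$ gives $B_{\sigma\alpha\beta\gamma}+C_{\sigma\beta\gamma\alpha}+C_{\sigma\gamma\alpha\beta}=0$. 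Subtracting, the defect $S=B-C$ of purity in $(\alpha,\beta)$ satisfies $S_{\sigma\beta\gamma\alpha}=-S_{\sigma\gamma\alpha\beta}$, i.e.\ $S$ changes sign under the cyclic permutation of its last three slots; since that permutation has order three, $S=-S$ and the mixed purity follows.

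Two points in your write-up need repair. First, you launch the argument from Assertion 1 to secure purity in $(\beta,\gamma)$, but Assertion 1 rests on Lemma 2 and hence on the extra hypothesis $\nabla f=0$, which Theorem 1 does not assume; its only hypotheses are the $B$-type condition and purity in one (unspecified) pair. Second, your count of ``two mixed families'' is off: if $R(A,fX,Y,C)=R(A,X,fY,C)$, then antisymmetry in the first two slots converts this into $R(fX,A,Y,C)=R(X,A,fY,C)$, and antisymmetry in the last two gives the corresponding $(\alpha,\gamma)$ statement, so the four mixed purities are all equivalent to one another using the antisymmetries alone, with no appeal to the aligned purities. This observation both removes the need for Assertion 1 and closes the case your narrative does not cover, namely that the single pure pair granted by the hypothesis is an aligned one: if the given pair is mixed, all four mixed purities hold and the two aligned ones follow by transitivity (purity in $(\sigma,\beta)$ and in $(\alpha,\beta)$ together force purity in $(\sigma,\alpha)$); if the given pair is aligned, the block symmetry supplies the other aligned pair and the Bianchi computation above then delivers the whole mixed family. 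Note finally that, carried out on the fully covariant tensor, the argument never uses the $B$-type condition; that condition is only needed to transfer purity statements between $R_{\sigma\alpha\beta\gamma}$ and $R^{\sigma}_{\alpha\beta\gamma}$, as you correctly anticipate.
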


\begin{theorem} The components $R_{\sigma \alpha \beta \lambda}$ of the curvature tensor of a $B$-type
manifold are pure then and only then when the partial derivatives of
the metric tensor are pure. \end{theorem}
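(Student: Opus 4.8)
The plan is to prove both implications by reducing each of the two conditions to the covariant constancy $\nabla f = 0$ of the nilpotent structure under the Levi-Civita connection $\nabla = (\mathfrak{g}^{\sigma}_{\alpha\beta})$ determined by $g$. Throughout I work in a local basis adapted to $f$, in which the entries $f^{\alpha}_{\beta}$ are constant (the canonical block form displayed above), so that $\partial_{\gamma} f^{\alpha}_{\beta} = 0$ and hence

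\[ \nabla_{\gamma} f^{\alpha}_{\beta} = \mathfrak{g}^{\alpha}_{\gamma\lambda} f^{\lambda}_{\beta} - \mathfrak{g}^{\lambda}_{\gamma\beta} f^{\alpha}_{\lambda} \ . \]

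First I would establish the pivotal equivalence: the partial derivatives $\partial_{\gamma} g_{\alpha\beta}$ are pure if and only if the coefficients $\mathfrak{g}^{\sigma}_{\alpha\beta}$ are pure with respect to one upper and one lower index. For this I use the compatibility relation $\partial_{\gamma} g_{\sigma\beta} = \mathfrak{g}^{\tau}_{\gamma\sigma} g_{\tau\beta} + \mathfrak{g}^{\tau}_{\gamma\beta} g_{\sigma\tau}$ coming from $\nabla g = 0$, together with the $B$-type hypothesis that $g_{\alpha\beta}$ — and therefore its inverse $g^{\alpha\beta}$ — is pure toward $f$. Lowering the upper index of $\mathfrak{g}^{\sigma}_{\gamma\beta}$ by $g$ and expressing everything through $F_{\lambda\sigma} = g_{\sigma\tau} f^{\tau}_{\lambda}$, the purity of the three derivative terms transfers to the purity of $\mathfrak{g}$ and back, while raising the index again with the pure $g^{\alpha\beta}$ preserves purity. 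By the displayed formula for $\nabla_{\gamma} f^{\alpha}_{\beta}$, purity of $\mathfrak{g}$ in the pair $(\alpha,\beta)$ is exactly the statement $\nabla f = 0$; thus the three conditions — purity of $\partial g$, purity of $\mathfrak{g}$, and $\nabla f = 0$ — are all equivalent.

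Granting this, the forward implication is immediate: if $\partial g$ is pure then $\nabla f = 0$, and Lemma 2 yields purity of $R^{\sigma}_{\alpha\beta\gamma}$ with respect to one upper and one lower index. Lowering the index with the pure metric $g$ gives purity of $R_{\sigma\alpha\beta\lambda}$ in a pair of lower indices, and Theorem 1 then upgrades this to purity with respect to all double indices, which is the conclusion.

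The converse — recovering purity of $\partial g$ from purity of $R_{\sigma\alpha\beta\lambda}$ — is where the main difficulty lies, since the curvature carries only second-order information, whereas $\nabla f = 0$ is a first-order condition and is not in general forced by purity of $R$ alone. Here I would exploit the integrability of $f$ together with the Ricci identity used in Lemma 2: purity of $R$ makes the curvature terms cancel, so the antisymmetrized second covariant derivative $\nabla_{\alpha}\nabla_{\beta} f - \nabla_{\beta}\nabla_{\alpha} f$ vanishes, and I would then combine this constraint with the vanishing Nijenhuis tensor and the second Bianchi identity to conclude $\nabla f = 0$, whence purity of $\partial g$ follows from the equivalence of the first paragraph. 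Alternatively one can substitute the explicit expression of $R_{\sigma\alpha\beta\lambda}$ through $\partial^{2} g$ and products of $\mathfrak{g}$ into the purity relations and, matching the pure and non-pure parts in the adapted basis, read off the purity of $\partial g$ directly. I expect this matching argument, rather than the forward implication, to be the technical heart of the proof.
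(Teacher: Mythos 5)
First, a point of comparison: the paper does not actually prove this statement --- Theorems 1 and 2 are quoted as results established elsewhere (in references [4] and [5]), so there is no in-paper argument to measure yours against. Judged on its own, your forward implication is sound and is assembled correctly from the surrounding material: purity of $\partial_{\sigma}g_{\alpha\beta}$ gives purity of the Christoffel symbols (the paper's Theorem 3), hence $\nabla f=0$ in the adapted basis where $f$ has constant entries, hence purity of $R^{\sigma}_{\alpha\beta\gamma}$ in an upper--lower pair (Lemma 2); lowering the index with the pure metric and invoking Theorem 1 then upgrades this to purity in all double indices.

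The converse, which you yourself flag as the technical heart, is where the proposal has a genuine gap. The Ricci identity gives
\[
\nabla_{\alpha}\nabla_{\beta}f^{\gamma}_{\sigma}-\nabla_{\beta}\nabla_{\alpha}f^{\gamma}_{\sigma}
= R^{\gamma}_{\alpha\beta\lambda}f^{\lambda}_{\sigma}-R^{\lambda}_{\alpha\beta\sigma}f^{\gamma}_{\lambda}\ ,
\]
so purity of $R$ only tells you that the antisymmetrized second covariant derivative of $f$ vanishes; this is a constraint on $\nabla\nabla f$, not on $\nabla f$, and it is exactly the implication of Lemma 2 read in the direction that does not help you. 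It does not force the first-order condition $\nabla f=0$. Neither the vanishing of the Nijenhuis tensor (which in the adapted basis is automatic, since the entries of $f$ are constants) nor the second Bianchi identity supplies the missing first-order information, and you give no mechanism by which they would. Your alternative suggestion --- substitute the explicit expression of $R_{\sigma\alpha\beta\lambda}$ through $\partial^{2}g$ and products of the $\mathfrak{g}^{\sigma}_{\alpha\beta}$ into the purity relations and match components block by block using the special form $f^{n+i}_{k}=\delta^{i}_{k}$ --- is the more promising route and is presumably what the cited source does, but as written it is a plan rather than a proof: the entire content of the ``only if'' direction lies in carrying out that block computation, and until it is done the converse remains unproved.
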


\

\textbf{Corollary from Assertion 1.} Tensor $G$, defined by means of
the equation
\[ G(x,y,v,w) = g(x,v)g(y,w) - g(x,w)g(y,v)\]
is curvature-related for $\mathfrak{M}(g,f)$. At that the tensor
$\overset{\ast}{G}$, for which
\[ \overset{\ast}{G}(x,y,v,w) = G(x, fy, v, fw) =
-g(x,fw)g(fy,v)\] is pure or hybrid depending on $\mathfrak{M}
(g,f)$. In case of
\[x \neq \ker{f}, \ G(x, fx, y, fy) = -
[g(x,fy)]^{2} \ ,\]

from where follows:

\

a) If $\mathfrak{M} (g,f)$ is of type $B$, then $g(x, fx) \neq 0$.
In this case, on the basis of Assertion 1 and Theorem 1, it is
possible for $\mathfrak{M} (g,f)$ to be flat on account of the fact
that the holomorphic curvature in the direction of $\{x,fx\}$ is
zero.

\

b) If $\mathfrak{M} (g,f)$ is of $K\ddot{a}hler$ type. Now $g(x,fx)
= 0$. Therefore the holomorphic curvature in the direction of
$\{x,fx\}$ is indefinite.

\begin{theorem} If $\mathfrak{M} (g,f)$ is a $B$-type manifold, then the Riemannian
connection, originating from $g$, possesses pure connection
coefficients with respect to all indices then and only then, when
the partial derivatives
\[\partial_{\sigma} g_{\alpha \beta} =
\frac{\partial}{\partial z^{\sigma}} g_{\alpha \beta}\]
are pure
towards $f$.
\end{theorem}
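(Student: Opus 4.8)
The plan is to route the equivalence through the two classical identities that tie the Levi-Civita coefficients $\mathfrak{g}^{\alpha}_{\beta\sigma}$ to the metric, namely the Christoffel formula
\[ 2\,\mathfrak{g}^{\alpha}_{\beta\sigma} = g^{\alpha\tau}\big( \partial_{\beta} g_{\sigma\tau} + \partial_{\sigma} g_{\beta\tau} - \partial_{\tau} g_{\beta\sigma} \big) \]
and the metric-compatibility identity $\nabla_{\sigma} g_{\alpha\beta} = 0$, i.e.
\[ \partial_{\sigma} g_{\alpha\beta} = g_{\lambda\beta}\,\mathfrak{g}^{\lambda}_{\sigma\alpha} + g_{\alpha\lambda}\,\mathfrak{g}^{\lambda}_{\sigma\beta}. \]
The first expresses $\mathfrak{g}$ through $g^{-1}$ and the partial derivatives $\partial g$, while the second expresses $\partial g$ through $g$ and $\mathfrak{g}$. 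Thus each of the two implications becomes a matter of transporting the purity property across one of these formulas.

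First I would record the algebraic inputs. For a $B$-type manifold the defining relation $g(fx,y)=g(x,fy)$ is exactly the purity of $g_{\alpha\beta}$, namely $f^{\lambda}_{\alpha} g_{\lambda\beta} = f^{\lambda}_{\beta} g_{\alpha\lambda}$, and contracting this with $g^{-1}$ on both free indices yields purity of $g^{\alpha\beta}$ as well. The one structural fact I would isolate and reuse is that any contraction of two pure tensors is again pure: one slides $f$ off the summed index of the first factor and onto the summed index of the second (both moves legal by purity), which transfers $f$ from any chosen free index to any other. This closure holds verbatim for the nilpotent $f$, since it invokes only the sliding rule and never invertibility of $f$.

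With these in place the two implications are short. For the direction ``$\partial g$ pure $\Rightarrow \mathfrak{g}$ pure'', each of the three summands on the right of the Christoffel formula is a contraction of the pure tensor $g^{-1}$ with the pure tensor $\partial g$, hence pure; their sum is then pure, so $\mathfrak{g}^{\alpha}_{\beta\sigma}$ is pure with respect to all indices. For the converse, each summand on the right of the compatibility identity is a contraction of the pure $g$ with the pure $\mathfrak{g}$, hence pure, so $\partial_{\sigma} g_{\alpha\beta}$ is pure in all three of its indices.

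The step I expect to demand the most care is the bookkeeping of \emph{which} index pairs the purity statements concern, so that ``pure with respect to all indices'' genuinely emerges and not merely purity in the two metric slots. In the Christoffel formula the first two terms contract $g^{\alpha\tau}$ against a metric index of $\partial g$, whereas the third term $g^{\alpha\tau}\partial_{\tau} g_{\beta\sigma}$ contracts against the \emph{differentiation} index; transporting $f$ through this last term is precisely what forces the hypothesis to be purity of $\partial g$ with respect to all indices, and here one must verify that the slide lands on the free index $\alpha$ and not on $\beta$ or $\sigma$. Once the closure lemma is phrased so as to treat the differentiation index on the same footing as the metric indices, both implications close with no further computation.
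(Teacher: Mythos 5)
Your proof is correct and follows essentially the same route as the paper: the forward direction is verbatim the paper's argument (the Christoffel formula together with the purity of $g^{\alpha\beta}$ extracted from $g^{\lambda\sigma}g_{\lambda\gamma}=\delta^{\sigma}_{\gamma}$), and your converse, though packaged as an application of the closure-under-contraction lemma to $\partial_{\sigma}g_{\alpha\beta}=g_{\lambda\beta}\mathfrak{g}^{\lambda}_{\sigma\alpha}+g_{\alpha\lambda}\mathfrak{g}^{\lambda}_{\sigma\beta}$, performs the same $f$-sliding computation that the paper carries out by contracting $\nabla_{\tau}g_{\alpha\beta}=0$ with $f^{\tau}_{\sigma}$ and subtracting the result from $\nabla_{\sigma}\tilde{g}_{\alpha\beta}=0$. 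Isolating the lemma that a contraction of pure tensors is pure (valid without invertibility of $f$) is a tidier presentation but does not change the substance.
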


\begin{proof} We need to note down in advance that the symmetrical tensor
field
\[ \tilde{g} = (\tilde{g}_{\alpha \beta} = g_{\lambda \beta} f^{\lambda}_{\alpha}) \]
is transferred parallely towards the Riemannian connection,
originating from $g$. Because of $f^{\alpha}_{\beta} = const$ we
have

\[ \nabla_{\sigma} \tilde{g}_{\alpha \beta} = \partial_{\sigma} \tilde{g}_{\alpha \beta} - \mathfrak{g}_{\sigma \alpha}^{\lambda} \tilde{g}_{\lambda \beta} - \mathfrak{g}_{\sigma \beta}^{\tau} \tilde{g}_{\alpha \tau} = 0 \]

Here we used the corollary from Lemma 1.

\

a)  Let $\partial_{\sigma} g_{\alpha \beta}$ be pure toward $f$. In
this case from the condition

\begin{equation} 2 \mathfrak{g}^{\sigma}_{\alpha \beta} = g^{\sigma \lambda} (\partial_{\alpha} g_{\beta \lambda} + \partial_{\beta} g_{\lambda \alpha} - \partial_{\lambda} g_{\alpha \beta}) \end{equation}

and the purity of $g^{\sigma \lambda}$, resulting from the equations

\begin{eqnarray}
g^{\lambda \sigma} g_{\lambda \gamma} &=&
\delta^{\sigma}_{\gamma} \Rightarrow \nonumber \\ \ \nonumber \\
& \Rightarrow & g^{\lambda \sigma} g_{\lambda \gamma}
f^{\nu}_{\sigma} = f^{\nu}_{\gamma} \Rightarrow \nonumber \\ \
\nonumber \\ &\Rightarrow& g^{\lambda \sigma}
\delta^{\beta}_{\lambda} f^{\nu}_{\sigma} = f^{\nu}_{\gamma}
g^{\gamma \beta} \Rightarrow \nonumber \\ \ \nonumber \\
&\Rightarrow& g^{\beta \sigma} f^{\nu}_{\sigma} = g^{\sigma \beta}
f^{\nu}_{\gamma} \ \ \  \mbox{(the purity of $g^{\alpha \beta}$)} \
, \nonumber
\end{eqnarray}

there follows the purity of the connection coefficients with respect
to all indices.

\

b)  We assume that $\mathfrak{g}^{\sigma}_{\alpha \beta}$ are pure
with respect to all indices. In this case from $\nabla_{\sigma}
g_{\alpha \beta} = 0$ follow the conditions

\[ \partial_{\sigma} \tilde{g}_{\alpha \beta} - \mathfrak{g}^{\lambda}_{\sigma \alpha} \tilde{g}_{\lambda \beta} - \mathfrak{g}^{\nu}_{\sigma \beta} \tilde{g}_{\alpha \nu} = 0 \]

and

\[ f^{\tau}_{\sigma}\partial_{\tau} g_{\alpha \beta} - \mathfrak{g}^{\lambda}_{\sigma \alpha} \tilde{g}_{\lambda \beta} - \mathfrak{g}^{\nu}_{\sigma \beta} \tilde{g}_{\alpha \nu} = 0 \ . \]

By means of their term-by-term subtraction we obtain

\[ f^{\tau}_{\sigma}(\partial_{\tau} g_{\alpha \beta}) = \partial_{\sigma} ( f^{\lambda}_{\alpha} g_{\lambda \beta} ) \ .\]

\end{proof}

\begin{definition} If for $\mathfrak{M} (g,f)$ the partial derivatives of the components of $g$ are
pure, we say that $\mathfrak{M} (g,f)$ is a $B$-manifold.
\end{definition}

\begin{theorem} If $\mathfrak{M} (g,f)$ is a $B$-manifold and $\det (g + \tilde{g}) \neq 0$, then the metrics $g$ and
$g + \tilde{g}$ originate one and same Riemannian connection.
\end{theorem}

\begin{proof}
Let us take into consideration that the matrices $(g_{\alpha \beta}
+ \tilde{g}_{\alpha \beta})$ and $(g^{\alpha \beta} -
\tilde{g}^{\alpha \beta})$ are mutually inverse. Here we have
denoted $\tilde{g}^{\alpha \beta} = g^{\alpha \lambda}
f^{\beta}_{\lambda}$ and used the purity of $g_{\alpha \beta}$,
which was proved in the previous theorem. If
$\overline{\mathfrak{g}}^{\sigma}_{\alpha \beta}$ are the
coefficients of the Riemannian connection, originating from $g +
\tilde{g}$, we apply formula (3) but in reference to the metric $g +
\tilde{g}$. In the course of the calculations we should keep in mind
that the objects $\partial_{\sigma} g_{\alpha \beta}$ are pure. Thus
there follows that $\overline{\mathfrak{g}}^{\sigma}_{\alpha \beta}
= \mathfrak{g}^{\sigma}_{\alpha \beta}$.
\end{proof}

\

The theorem proved holds true in the most common case. In the
special case where $m = 0$, i.e. when $\mathfrak{M} (g,f)$ is a
tangential differentiation on base $B$, a similar theorem is proved
for the lifts of the base metric in (\cite{7}, p. 149). There $g +
\tilde{g}$ is designated as a metric I + II. On the other hand, the
change $g \rightarrow g + \tilde{g}$ is the simplest CH-change. On
condition that $f^{2} = I$ ($I$ is the identical transformation),
the CH-change is studied in detail in \cite{4}. To the question
whether it is the conformal change that needs to be investigated, or
the generalization and the CH-change for $\mathfrak{M} (g,f)$ with
$f^{2} = 0$, we obtain an answer by means of

\begin{theorem}
If $h(z^{1}, \dots \ , z^{m+2n})$ is a random manifold, and
$\mathfrak{M} (g,f)$ is a $B$-manifold, then $\mathfrak{M} (hg,f)$
is a $B$-manifold then and only then, when $h = const$.
\end{theorem}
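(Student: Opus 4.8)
The plan is to reduce the $B$-manifold condition for $\mathfrak{M}(hg,f)$ to a single tensor identity involving only the differential of $h$, and then to exploit the nilpotency of $f$ to force $dh=0$. First I would dispose of the trivial direction: if $h=\mathrm{const}$ then $\partial_\sigma(hg_{\alpha\beta})=h\,\partial_\sigma g_{\alpha\beta}$, which is pure because $\mathfrak{M}(g,f)$ is a $B$-manifold, so $\mathfrak{M}(hg,f)$ is a $B$-manifold as well. I would also remark at the outset that the purity of the metric itself is never at stake: since $g$ is pure (Example 1) and $f^\alpha_\beta=\mathrm{const}$ by the corollary to Lemma 1, the tensor $hg_{\alpha\beta}$ is automatically pure in $(\alpha,\beta)$ for any $h$; the entire content of the theorem therefore lives in the purity of the \emph{partial derivatives}.

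For the nontrivial direction I would assume $\mathfrak{M}(hg,f)$ is a $B$-manifold and write out the purity of $\partial_\sigma(hg_{\alpha\beta})$ in the differentiation index paired with a metric index, which is the sense used in Definition 1 and already derived in the proof of Theorem 3, namely $f^\tau_\sigma\,\partial_\tau(hg_{\alpha\beta})=f^\lambda_\alpha\,\partial_\sigma(hg_{\lambda\beta})$. Expanding both sides by the product rule produces an $h\,\partial g$ term and a $(\partial h)\,g$ term. The $h\,\partial_\sigma g$ contributions cancel between the two sides \emph{precisely} because $\partial_\sigma g_{\alpha\beta}$ is itself pure (the hypothesis that $\mathfrak{M}(g,f)$ is a $B$-manifold), leaving the clean identity
\[ (f^\tau_\sigma\,\partial_\tau h)\,g_{\alpha\beta} = (\partial_\sigma h)\,\tilde g_{\alpha\beta}, \qquad \tilde g_{\alpha\beta}=f^\lambda_\alpha g_{\lambda\beta}. \]
This isolation step is, to my mind, the crux: once the $g$-derivative terms drop out, the remaining condition constrains $h$ alone. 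I would note in passing that pairing the two metric indices $(\alpha,\beta)$ instead yields only a tautology — it is satisfied automatically by the $B$-type purity of $g$ and $\partial g$ — so one is forced to use the pair containing the differentiation index to extract any information.

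To finish I would contract the boxed identity with the inverse metric $g^{\alpha\beta}$. On the left, $g^{\alpha\beta}\tilde g_{\alpha\beta}=g^{\alpha\beta}f^\lambda_\alpha g_{\lambda\beta}=f^\lambda_\lambda=\operatorname{tr}f=0$ because $f$ is nilpotent, so that side vanishes; on the right, $g^{\alpha\beta}g_{\alpha\beta}=m+2n\neq 0$. Hence $f^\tau_\sigma\,\partial_\tau h=0$ for every $\sigma$, and feeding this back the identity collapses to $(\partial_\sigma h)\,\tilde g_{\alpha\beta}=0$. Since $g$ is nondegenerate and $f\neq 0$ (its matrix carries the identity block $E$), the tensor $\tilde g_{\alpha\beta}=f^\lambda_\alpha g_{\lambda\beta}$ is not identically zero, whence $\partial_\sigma h=0$ for all $\sigma$, i.e. $h=\mathrm{const}$.

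I do not expect a genuine analytic obstacle here; the difficulties are purely organizational — getting the correct index pair for purity and invoking $\operatorname{tr}f=0$ together with $\det g\neq 0$ at the right moment. It is worth emphasizing that this last point is exactly where the hypothesis $f^2=0$, as opposed to $f^2=I$, is decisive: for a nilpotent $f$ the form $\tilde g$ is degenerate and traceless against $g$, which kills $dh$, whereas in the classical $f^2=I$ CH-setting $\tilde g$ is nondegenerate and $\operatorname{tr}f$ need not vanish, so the contraction imposes no such rigidity. This explains both the ``only if'' clause and why the conformal/CH-question raised just before the theorem must be answered negatively in the present $f^2=0$ geometry.
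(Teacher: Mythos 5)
Your proof is correct, and its core is the same as the paper's: both directions reduce to expanding $\partial_\lambda(hg_{\alpha\beta})f^\lambda_\sigma=\partial_\sigma(hg_{\lambda\beta}f^\lambda_\alpha)$ by the product rule, cancelling the $h\,\partial g$ terms via the $B$-manifold hypothesis on $g$, and arriving at the identity $(f^\tau_\sigma\partial_\tau h)\,g_{\alpha\beta}=(\partial_\sigma h)\,\tilde g_{\alpha\beta}$. Where you diverge is the finish. The paper contracts with $g^{\beta\mu}$ over one index to get $f^\lambda_\sigma h_\lambda\,\delta^\mu_\alpha=h_\sigma f^\mu_\alpha$ and then specializes to the components $f^{n+i}_{k}=\delta^i_k$, $\delta^{n+i}_k=0$ of the explicit block matrix of $f$, which kills $h_\sigma$ in one line. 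You instead contract with $g^{\alpha\beta}$ over both indices, use $\operatorname{tr}f=0$ (nilpotency) and $g^{\alpha\beta}g_{\alpha\beta}=m+2n\neq 0$ to get $f^\tau_\sigma\partial_\tau h=0$, and then feed that back and use $\tilde g\neq 0$ (nondegeneracy of $g$ plus $f\neq 0$) to conclude $\partial_\sigma h=0$. Your ending is coordinate-free — it needs only $\operatorname{tr}f=0$ and $f\neq 0$, not the particular semitangential block structure — at the cost of one extra step; the paper's is shorter but tied to the explicit form of $f$. One cosmetic slip: when you contract, you call $g^{\alpha\beta}\tilde g_{\alpha\beta}$ the ``left'' side although $\tilde g$ sits on the right of your displayed identity; the computation itself is unaffected. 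Your closing remark correctly identifies why the argument is specific to $f^2=0$: for $f^2=I$ neither $\operatorname{tr}f=0$ nor the degeneracy of $\tilde g$ is available, which is exactly why the paper's $f^{n+i}_k$ specialization (or your trace contraction) produces rigidity here but not in the classical CH-setting.
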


\begin{proof}
If $\mathfrak{M} (hg,f)$ is a $B$-manifold, there follows that

\[ \partial_{\lambda} (hg_{\alpha \beta}) f^{\lambda}_{\sigma} = \partial_{\sigma}(hg_{\lambda \beta} f^{\lambda}_{\alpha}) \ . \]

Hence we obtain

\[ f_{\sigma}^{\lambda} h_{\lambda} \delta^{\beta}_{\alpha} = h_{\sigma} f^{\beta}_{\alpha} \ , \ \ \  (h_{\lambda} = \frac{\partial h}{\partial z^{\lambda}}) \]

This equation holds true for all values of the indices. For $\beta =
n+i\ , \alpha = k \ , f^{n+i}_{k} = \delta^{i}_{k}$ in particular,
from the specified equation we have

\[ f_{\sigma}^{\lambda} h_{\lambda} \delta^{n+i}_{k} = h_{\sigma} \delta^{i}_{k} \]

or $h_{\sigma} = 0 $ for every $\sigma$.

In the opposite case, where $h = const $, it is obvious that
$\mathfrak{M} (hg,f)$ is a $B$-manifold.
\end{proof}

\textbf{Geodesic and holomorphic plane curves.} If we assume that on
the manifold $\mathfrak{M} (f)$, which is provided with a pure
connection with respect to $f$, the curve $c: z^{\alpha} =
z^{\alpha} (+)$ is geodesic, after changing the parameter $t : t =
h(q)$, the equation of $c$

\[ \frac{\delta \dot{z}^{\alpha}}{dt} = \frac{d^{2} z^{\alpha}}{dt^{2}} + \gamma^{\alpha}_{\lambda \beta} \frac{d z^{\lambda}}{dt} . \frac{d^{2} z^{\beta}}{dt} = 0 \]

($ \dot{z}^{\alpha} = \frac{d z^{\alpha}}{dt}$ is the covariant
differentiation toward the connection, which has the
$\gamma^{\alpha}_{\sigma \beta}$ coefficients) is equivalent to

\[ \frac{\delta}{dq}(\frac{d z^{\alpha}}{dq}) = \frac{dh}{dq}.\frac{dz^{\alpha}}{dq} \ . \]

It is possible to transfer the field $\upsilon =
(\upsilon^{\alpha})$ parallely with respect to $c$

\

\[ \frac{\delta \upsilon^{\alpha}}{dq} =  \frac{d \upsilon^{\alpha}}{dq} + \gamma^{\alpha}_{\lambda \beta} \dot{z}^{\lambda} \frac{d \upsilon^{\beta}}{dt} = 0 \]

\

After changing $\upsilon^{\alpha} \rightarrow \lambda (t)
\upsilon^{\alpha}$ we have

\

\[ \frac{\delta}{dt}(\lambda(t)\upsilon^{\lambda}) = \lambda^{\prime} (t) \upsilon^{\alpha} \]

\

In both of the examples given we will say that the directions
$\frac{d z^{\alpha}}{dt}$ and $\lambda \upsilon^{\alpha}$ are
transferred parallely with respect to $c$.

\

We say that the curve $z^{\alpha} (t)$ from $\mathfrak{M} (f)$ is a
holomorphic plane curve (a PH-curve) for $\mathfrak{M} (f)$, if
$z^{\alpha} (t)$ are the solutions to the differential equation

\[ \frac{\delta \dot{z}^{\alpha}}{dt} = \frac{d^{2} z^{\alpha}}{dt^{2}} + \gamma^{\alpha}_{\lambda \beta} \frac{d z^{\lambda}}{dt} . \frac{d^{2} z^{\beta}}{dt} = a(t) \frac{d z^{\lambda}}{dt} + b(t)(f^{\lambda}_{\nu} \frac{dz^{\nu}}{dt}) \ . \]

Here $a(t)$ and $b(t)$ are functions. We will adopt the designation

\[ \frac{\widetilde{dz^{\alpha}}}{dt}= f^{\lambda}_{\nu} \frac{dz^{\nu}}{dt} \ . \]

There exists a special case, in which the geodesic curves and the
PH-curves coincide. That is when $\mathfrak{M}(f)$ is
projective-Euclidean or PH-Euclidean. This peculiarity is
illustrated best about the three-dimensional projective space
$B_{3}$ (an extension of the corresponding Euclidian one). In
$B_{3}$ the above curves coincide with the absolute straight line
$\omega$, which has a common point with each straight line from the
absolutely congruent straight lines \cite{8}. The absolute straight
line $\omega$ is the set of all points belonging to $\ker f$.

\begin{assertion}
If $z^{\alpha} (t)$ is geodesic for $\mathfrak{M} (f)$ and
$\dot{z}^{\alpha} \overline{\in} \ker f$, then $z^{\alpha} (t)$ is a
PH-curve.
\end{assertion}

\begin{proof}
Let us recall that $f$ is covariantly constant and the objects
$\gamma^{\alpha}_{\lambda \beta}$ are pure with regard to $f$. In
this case, from the differential equation of the geodesic curve
there follows

\[ \frac{\delta}{dt} \tilde{\dot{z}}^{\alpha} = 0 \ . \]

Let us consider the linear combination $\upsilon^{\alpha} = h(t)
\dot{z}^{\alpha} + l(t)\tilde{\dot{z}}^{\alpha}$ for certain
functions $h(t)$ and $l(t)$. For the vector field $\upsilon =
(\upsilon^{\alpha})$ we have

\[ \frac{\delta \upsilon^{\alpha}}{dt} = [h(t)]^{\prime}\dot{z}^{\alpha} + [l(t)]^{\prime}\tilde{\dot{z}}^{\alpha}  \ ,\]

which shows that a random vector from the holomorphic vector space
$\{ \dot{z}^{\alpha}, \tilde{\dot{z}}^{\alpha} \}$, at a parallel
transfer with respect to $z^{\alpha} (t)$, remains in $\{
\dot{z}^{\alpha}, \tilde{\dot{z}}^{\alpha} \}$, i.e. $z^{\alpha}
(t)$ is a PH-curve.
\end{proof}

\begin{assertion}
If $z^{\alpha} (t)$ is geodesic for $\mathfrak{M}(f)$ and
$\dot{z}^{\alpha} \in \ker f$, then $z^{\alpha} (t)$ is a PH-curve
and it belongs to the special plane (straight line) $\omega$ in the
biaxial space $B_{n}(B_{3})$.
\end{assertion}

\begin{proof}
The assertion is obvious because
\[ \frac{\delta}{dt}
\tilde{\dot{z}}^{\alpha} = 0 \ . \]
\end{proof}

\begin{assertion}
If $z^{\alpha} (t)$ is a PH-curve and $\dot{z}^{\alpha} \in \ker f$,
then $z^{\alpha} (t)$ is geodesic.
\end{assertion}

\begin{proof}
From the differential equation of PH-curves follows that
\[ \frac{\delta \dot{z}^{\alpha}}{dt} = a(t) \dot{z}^{\alpha} \ . \]
The tangential direction is transferred parallely with respect to
$z^{\alpha} (t)$, therefore $z^{\alpha} (t)$ is geodesic.
\end{proof}

\

We note down that in this case all curves
\[ z^{\alpha} (t) = (c_{1}, \dots , c_{n}, z^{n+1}(t), \dots , z^{2n} (t), z^{2n+1} (t), \dots , z^{2n+m} (t)) \]
possess one and the same tangential vector field. The special case,
where $n = 1, m = 0$ in the two-dimensional affine plane $0xy$, is
interesting. There $z^{\alpha} (t) = (c, z(t))$. The corresponding
geodesic curve is a straight line, which is parallel to the axis
$0y$.

\

Let us consider a particular case, where $\mathfrak{M}(f)$ is a
tangential differentiation of the $B$ manifold.

\

\begin{definition}
We say that the pair of functions $p(u,v), q(u,v)$ is holomorphic if
\[ \frac{\partial p}{\partial u} = \frac{\partial q}{\partial v}\]
and
\[ \frac{\partial p}{\partial v} = 0 \ . \]
\end{definition}

\

On the basis of this definition of the two-dimensional surface
\[ S^{H} \subset \mathfrak{M}(f) : \{ z^{i}(u); z^{n+i}(u,v) = v\frac{dz^{i}}{du} \} \ , \]
we will say that it is holomorphic in $\mathfrak{M}(f)$.

\begin{assertion}
As regards a holomorphic two-dimensional surface $S^{H}$, with the
change of the parameters $\overline{u} = h(u)$, $\overline{v} =
t(u,v)$ in such a way that for the differentiable functions $h(u)$
and $t(u,v)$ there holds
\[ \overline{u} = h(u) \]
and
\[ \frac{dh}{du} = \frac{\partial t}{\partial v} \ , \]
the holomorphicity of $S^{H}$ is preserved.
\end{assertion}

\begin{proof}
We substitute the values in the parametric equation of
\[ S^{H} : z^{i} = z^{i}(\overline{u}), z^{n+i} = \overline{v}\frac{dz^{i}}{d\overline{u}} (\overline{u}, \overline{v}) \]
with their equals. In this case $z^{i}$ depend only on the variables
$u$, and at that
\[ \frac{dz^{i}}{du} = \frac{dz^{i}}{d\overline{u}} . \frac{dh}{du} \ ; \]

\

\[ \frac{\partial z^{n+i}}{\partial v} = \frac{\partial z^{n+1}}{\partial \overline{v}} . \frac{dt}{dv} = \frac{\partial}{\partial \overline{v}} (\overline{v} \frac{dz^{i}}{d\overline{u}}) . \frac{\partial t}{\partial v} = \frac{dz^{i}}{d\overline{u}} . \frac{dh}{du} = \frac{dz^{i}}{du} \ , \ \ \mbox{i.e.}\]

\

\[ \frac{\partial z^{n+i}}{\partial v} = \frac{dz^{i}}{du} \ .\]
\end{proof}

\

\begin{theorem}
Every geodesic curve $\beta(u) = (u = u, v = const)$ ($u$ - line)
from the two-dimensional holomorphic surface $S^{H} \subset
\mathfrak{M}(f)$ is a PH-curve for $\mathfrak{M}(f)$.
\end{theorem}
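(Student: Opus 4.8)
The plan is to show that along $\beta(u)$ the velocity vector and its $f$-image span exactly the tangent plane of $S^{H}$, and then to conclude by the covariant constancy of $f$ as in Assertion~2. First I would write the velocity of the $u$-line. On $S^{H}$ with $v=\mathrm{const}$ one has $z^{i}=z^{i}(u)$ and $z^{n+i}=v\,\frac{dz^{i}}{du}$, hence $\dot z^{\,i}=\frac{dz^{i}}{du}$ and $\dot z^{\,n+i}=v\,\frac{d^{2}z^{i}}{du^{2}}$. In the tangential case $m=0$ the structure has $f^{n+i}_{k}=\delta^{i}_{k}$ with all remaining components zero, so that $\tilde{\dot z}^{\,i}=f^{i}_{\nu}\dot z^{\nu}=0$ and $\tilde{\dot z}^{\,n+i}=f^{n+i}_{k}\dot z^{k}=\frac{dz^{i}}{du}$.

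The key step is to recognise $\tilde{\dot z}^{\alpha}$ as the coordinate field $\partial_{v}$ of the surface. By the preceding Assertion on holomorphic surfaces, $\frac{\partial z^{i}}{\partial v}=0$ and $\frac{\partial z^{n+i}}{\partial v}=\frac{dz^{i}}{du}$, so that $\partial_{v}=\big(0,\ \tfrac{dz^{i}}{du}\big)=\tilde{\dot z}^{\alpha}$, while $\partial_{u}=\dot z^{\alpha}$. Thus the tangent plane of $S^{H}$ along $\beta$ is precisely the holomorphic $2$-plane $\{\dot z^{\alpha},\tilde{\dot z}^{\alpha}\}$; in particular $\dot z^{\alpha}\notin\ker f$ whenever the base curve $z^{i}(u)$ is non-constant, which places us under the hypotheses of Assertion~2.

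Next I would invoke that $f$ is covariantly constant and that $\gamma^{\alpha}_{\lambda\beta}$ is pure (both recalled in the proof of Assertion~2) in order to pull $f$ through the covariant derivative along $\beta$:
\[ \frac{\delta}{du}\tilde{\dot z}^{\alpha}=f^{\alpha}_{\nu}\,\frac{\delta\dot z^{\nu}}{du}. \]
Since $\beta(u)$ is geodesic, $\frac{\delta\dot z^{\nu}}{du}$ is zero or, after the admissible reparametrisation, a multiple of $\dot z^{\nu}$; hence both $\frac{\delta\dot z^{\alpha}}{du}$ and $\frac{\delta}{du}\tilde{\dot z}^{\alpha}$ remain inside $\{\dot z^{\alpha},\tilde{\dot z}^{\alpha}\}$. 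Forming the combination $\upsilon^{\alpha}=h(u)\dot z^{\alpha}+l(u)\tilde{\dot z}^{\alpha}$ and differentiating covariantly, every vector of this plane is carried into the plane by parallel transport along $\beta$, which is exactly the condition defining a PH-curve; equivalently, Assertion~2 applies verbatim.

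The step I expect to be the main obstacle is the identification in the second paragraph: one must check that the holomorphicity condition $\frac{\partial z^{n+i}}{\partial v}=\frac{dz^{i}}{du}$ is precisely what forces $f\dot z=\partial_{v}$, so that the $f$-image of the $u$-tangent is genuinely tangent to $S^{H}$ rather than a generic vector of $\mathfrak{M}(f)$. Once this is secured the conclusion is immediate from $\nabla f=0$. The single remaining case $\dot z^{\alpha}\in\ker f$, corresponding to a constant base curve, is not covered by Assertion~2 but falls under Assertion~3, and should be flagged separately.
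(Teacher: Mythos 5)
Your proposal is correct and follows essentially the same route as the paper: you identify $f\dot\beta$ with the $v$-coordinate field $\partial_v$ of $S^{H}$ (the paper writes this as $\dot\gamma=f\dot\beta$ for the $v$-lines $\gamma$), and then use the purity of the connection coefficients, equivalently $\nabla f=0$, to pull $f$ through $\delta/du$ and conclude that $\tilde{\dot z}^{\alpha}$ is parallel along the geodesic $\beta$, so the holomorphic plane $\{\dot z^{\alpha},\tilde{\dot z}^{\alpha}\}$ is preserved. The paper itself remarks afterwards that the theorem is a confirmation of Assertion~2, which is exactly the reduction you make; your separate flag for the degenerate case $\dot z^{\alpha}\in\ker f$ is a harmless extra.
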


\begin{proof}
Let $\gamma (v) = (u = const, v = v)$ be the $v$-lines for $S^{H}$,
and
\[ \dot{\beta} = (\dot{z}^{i}; v\ddot{z}^{i}; 0, \dots , 0) \]
and
\[ \dot{\gamma} = (0, \dots ; \dot{z}^{i}; 0, \dots , 0)  \]
are the corresponding velocities of $\beta(u)$ and $\gamma(u)$.
Obviously
\[ \dot{\gamma} = f\dot{\beta} \ . \]
Besides,
\[
\frac{\delta \dot{\beta}^{\alpha}}{du} = \frac{d
\dot{\beta}^{\alpha}}{du} + \mathfrak{g}^{\alpha}_{\lambda \nu}
\dot{\beta}^{\lambda} \dot{\beta}^{\nu} = 0 \ .
\]
Taking into consideration the purity of the objects
$\mathfrak{g}^{\alpha}_{\lambda \nu}$ from the last two equations,
we obtain
\begin{eqnarray}
\frac{\delta \dot{\gamma}^{\alpha}}{du} &=& \frac{d}{du}
(f^{\alpha}_{\lambda} \dot{\beta}^{\lambda}) +
\mathfrak{g}^{\alpha}_{\nu \sigma} f^{\nu}_{\lambda}
\dot{\beta}^{\rho} \dot{\beta}^{\sigma} = \nonumber \\ \ \nonumber \\
&=& \frac{d}{du} (f^{\alpha}_{\lambda} \dot{\beta}^{\lambda}) +
\mathfrak{g}^{\lambda}_{\rho \sigma} f^{\alpha}_{\lambda}
\dot{\beta}^{\rho} \dot{\beta}^{\sigma} = \nonumber \\ \ \nonumber
\\ &=& \Big[ \frac{d}{du} \dot{\beta}^{\lambda} + \mathfrak{g}^{\lambda}_{\rho \sigma} \dot{\beta}^{\rho} \dot{\beta}^{\sigma} \Big] f^{\alpha}_{\lambda} = 0 \nonumber
\end{eqnarray}
Therefore $\dot{\gamma}$ is transferred parallely with respect to
the $u$-lines $\beta$, i.e. $\beta$ is a PH-curve for
$\mathfrak{M}(f)$.
\end{proof}

\

We note down that the theorem proved above is a confirmation of
Assertion 2. Here the object of consideration is a holomorphic
two-dimensional surface $S^{H}$ from $\mathfrak{M}(f)$, and we can
always consider that on it there is set a geodesic vector field,
which does not belong to $\ker f$. As it is recorded in \cite{9},
$S^{H}$ is interpreted as a real model of a geodesic curve on the
manifold $\overset{\ast}{\mathfrak{M}}$ over the algebra
\[ \mathbb{R}(\varepsilon) = \{a + \varepsilon b; \varepsilon^{2} = 0; a,b \in \mathbb{R} \} \ . \]
There the surfaces $S^{H}$ are called analytical.

\

If $q=q_{i}$ is a form on $\mathfrak{M}(f)$, there can be found a
lot of connections, which are pure with respect to $f$. The tensor
of the affine deformation $T$, through which these connections are
obtained, is
\[ T^{n}_{ik} = \delta^{n}_{i} \tilde{q}_{k} + f^{n}_{i} q_{k} + \delta^{n}_{k} \tilde{q}_{i} + f^{n}_{k} q_{i} \ . \]
In the above equation $\tilde{q}_{i} := q_{s} f^{s}_{i}$.

\

Obviously $T$ is pure with respect to all indices. Therefore any
connection
\[ \overline{\Gamma}^{\alpha}_{\beta \gamma} = \Gamma^{\alpha}_{\beta \gamma} + T^{\alpha}_{\beta \gamma} \]

is pure too.

\begin{theorem}
The connection $\overline{\nabla}$ with coefficients
$\overline{\Gamma}^{\alpha}_{\beta \sigma}$ possesses the following
properties:

(1) it has symmetrical and pure coefficients,

(2) $\overline{\nabla} f = 0 \ ,$

(3) $\overline{\nabla}$ and $\nabla$ have common PH-curves.

\end{theorem}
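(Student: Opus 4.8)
The plan is to establish the three properties in turn, all of them resting on the algebraic identities $f^{2}=0$ (nilpotency of the structure), $\tilde{q}_{\lambda}f^{\lambda}_{\beta}=q_{s}f^{s}_{\lambda}f^{\lambda}_{\beta}=0$, and $q_{\lambda}f^{\lambda}_{\beta}=\tilde{q}_{\beta}$, together with the fact that in the adapted basis the components $f^{\alpha}_{\beta}$ are constant. For property (1), symmetry in the two lower indices is immediate, since $\Gamma^{\alpha}_{\beta\gamma}$ is torsionless by hypothesis and the deformation $T^{\alpha}_{\beta\gamma}=\delta^{\alpha}_{\beta}\tilde{q}_{\gamma}+f^{\alpha}_{\beta}q_{\gamma}+\delta^{\alpha}_{\gamma}\tilde{q}_{\beta}+f^{\alpha}_{\gamma}q_{\beta}$ is visibly invariant under $\beta\leftrightarrow\gamma$. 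For purity I would verify directly that $T$ obeys the purity relation $T^{\lambda}_{\beta\sigma}f^{\alpha}_{\lambda}=T^{\alpha}_{\lambda\sigma}f^{\lambda}_{\beta}=T^{\alpha}_{\beta\lambda}f^{\lambda}_{\sigma}$: contracting $f$ into the upper index and using $f^{\alpha}_{\lambda}f^{\lambda}_{\mu}=0$ annihilates the two $q$-terms and leaves $f^{\alpha}_{\beta}\tilde{q}_{\sigma}+f^{\alpha}_{\sigma}\tilde{q}_{\beta}$, while contracting $f$ into a lower index and using $\tilde{q}_{\lambda}f^{\lambda}_{\beta}=0$ and $q_{\lambda}f^{\lambda}_{\beta}=\tilde{q}_{\beta}$ produces the same expression. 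Since $\Gamma$ is assumed pure and $T$ is pure, $\overline{\Gamma}=\Gamma+T$ is pure with respect to all indices.

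For property (2) I would simply re-run the computation of Lemma 1 with $\overline{\nabla}$ in place of $\nabla$. Writing $\overline{\nabla}_{\sigma}f^{\alpha}_{\beta}=\partial_{\sigma}f^{\alpha}_{\beta}-\overline{\Gamma}^{\alpha}_{\sigma\lambda}f^{\lambda}_{\beta}+\overline{\Gamma}^{\nu}_{\sigma\beta}f^{\alpha}_{\nu}$, the purity established in (1) forces $\overline{\Gamma}^{\alpha}_{\sigma\lambda}f^{\lambda}_{\beta}=\overline{\Gamma}^{\nu}_{\sigma\beta}f^{\alpha}_{\nu}$, so the two connection terms cancel and $\overline{\nabla}_{\sigma}f^{\alpha}_{\beta}=\partial_{\sigma}f^{\alpha}_{\beta}$. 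Because $f$ is integrable, in the adapted basis its components are constant, hence $\partial_{\sigma}f^{\alpha}_{\beta}=0$ and $\overline{\nabla}f=0$.

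For property (3) the governing observation is that the difference of the two absolute accelerations along a curve $z^{\alpha}(t)$ is the double contraction of the deformation tensor with the velocity, $\frac{\overline{\delta}\dot{z}^{\alpha}}{dt}-\frac{\delta\dot{z}^{\alpha}}{dt}=T^{\alpha}_{\beta\gamma}\dot{z}^{\beta}\dot{z}^{\gamma}$. I would compute this contraction explicitly; setting $A(t)=q_{\gamma}\dot{z}^{\gamma}$ and $\tilde{A}(t)=\tilde{q}_{\gamma}\dot{z}^{\gamma}$, the four terms of $T$ collapse to $T^{\alpha}_{\beta\gamma}\dot{z}^{\beta}\dot{z}^{\gamma}=2\tilde{A}(t)\,\dot{z}^{\alpha}+2A(t)\,\widetilde{\dot{z}}^{\alpha}$, i.e. a combination of $\dot{z}^{\alpha}$ and $\widetilde{\dot{z}}^{\alpha}=f^{\alpha}_{\nu}\dot{z}^{\nu}$ alone. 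Consequently, if $z^{\alpha}(t)$ satisfies the PH-equation $\frac{\delta\dot{z}^{\alpha}}{dt}=a(t)\dot{z}^{\alpha}+b(t)\widetilde{\dot{z}}^{\alpha}$ for $\nabla$, then it satisfies the same equation for $\overline{\nabla}$ with $\bar{a}=a+2\tilde{A}$ and $\bar{b}=b+2A$, and conversely; hence the two connections share the same PH-curves.

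The computations are all routine; the step requiring the most care is the verification of the purity of $T$ in (1), where one must invoke $f^{2}=0$ repeatedly to annihilate the $q$-terms under the upper-index contraction and the $\tilde{q}$-terms under the lower-index contractions. Once $T$ is known to be pure, property (2) is essentially a corollary of Lemma 1, and the only genuinely new content is the contraction identity in (3), which exhibits the deformation as acting inside the holomorphic plane $\{\dot{z},f\dot{z}\}$.
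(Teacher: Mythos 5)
Your proposal is correct and follows essentially the same route as the paper: purity of the deformation tensor $T$ via contraction with $f$ and the relations $f^{2}=0$, $q_{\lambda}f^{\lambda}_{\beta}=\tilde{q}_{\beta}$, $\tilde{q}_{\lambda}f^{\lambda}_{\beta}=0$; the covariant constancy of $f$ by cancellation of the connection terms (the paper cancels the two $T$-contractions against each other given $\nabla f=0$, you cancel the full $\overline{\Gamma}$-contractions by purity -- the same fact); and for (3) the identity $\frac{\overline{\delta}\dot{z}^{\alpha}}{dt}-\frac{\delta\dot{z}^{\alpha}}{dt}=T^{\alpha}_{\beta\gamma}\dot{z}^{\beta}\dot{z}^{\gamma}=2\tilde{q}(\dot{z})\dot{z}^{\alpha}+2q(\dot{z})\widetilde{\dot{z}}^{\alpha}$, exactly as in the paper. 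Your explicit verification of all three purity contractions of $T$ is in fact more complete than the paper's, which asserts this step as obvious.
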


\begin{proof}

\

\

\emph{(1)} The integrated tensor of $T$ is
\[ \overline{T}^{n}_{is} = T^{n}_{in} f^{n}_{s}= f^{n}_{i}\tilde{q}_{s} + f^{n}_{s}\tilde{q}_{i} \ . \]

It hence follows that the connection with coefficients
$\overline{\Gamma}^{\alpha}_{\beta \gamma}$ is also pure.

\

\emph{(2)} Let us assume that $\nabla f = 0$. Then
\begin{eqnarray}
\overline{\nabla} f^{n}_{i} &=& \overline{\Gamma}^{n}_{sj} f^{j}_{i}
- \overline{\Gamma}^{p}_{si} f^{n}_{p} = \nonumber \\ \ \nonumber \\
&=& \Gamma^{n}_{sj} f^{j}_{i} + T^{n}_{sj} f^{j}_{i} -
\Gamma^{p}_{si} f^{n}_{p} - T^{p}_{si} f^{n}_{p} = \nonumber \\ \
\nonumber \\ &=& \nabla f + \tilde{T}^{n}_{si} - \tilde{T}^{n}_{si}
= 0 \ . \nonumber
\end{eqnarray}

\

\emph{(3)} We denote with $\overline{\delta}$ the covariant
differential as regards $\overline{\nabla}$. Since
\[ \frac{\delta}{dt}\dot{x}^{n} = a(t)\dot{x}^{n} + b(t)\tilde{\dot{x}}^{n} \ , \]

there follows that
\begin{eqnarray}
\frac{\overline{\delta}}{dt}\dot{x}^{n} &=& \ddot{x} +
[\Gamma^{n}_{ik} + \delta^{n}_{i} \tilde{p}_{k} + \delta^{n}_{k}
\tilde{p}_{i} + f^{n}_{i}q_{k} + f^{n}_{k}q_{i} ] \dot{x}^{i}
\dot{x}^{n} = \nonumber \\ \ \nonumber \\ &=&
\frac{\delta}{dt}\dot{x}^{n} + \dot{x}^{n} \tilde{p}(\dot{x}) +
\dot{x}^{n} \tilde{p}(\dot{x}) + \tilde{\dot{x}}^{n}q(\dot{x}) +
\tilde{\dot{x}}^{n}q(\dot{x}) \nonumber \ .
\end{eqnarray}
Here we have designated $p_{k} := \tilde{q}_{k}$. Finally we obtain
\[ \frac{\overline{\delta}}{dt}\dot{x}^{n} = (a + 2\tilde{p}(\dot{x}))\dot{x}^{n} + (b + 2q(\dot{x}))\tilde{\dot{x}}^{n} \ . \]

\end{proof}
\begin{center}

\end{center}
{\small
\begin{tabular}{lcl}
\\
\\
Asen Hristov \\
Faculty of Mathematics and Informatics\\
University of Plovdiv "P. Hilendarski"\\
24, Tzar Assen Str.\\
4000 Plovdiv, BULGARIA\\
E-mail: hristov-asen@mail.bg&{}&
\end{tabular}}
\end{document}